\newcommand{\norma}[2]{\left\lVert #1 \right\rVert_{#2}}
\def\nn{\nonumber}
\def\div{ \hbox{\rm div}\,  }
\theoremstyle{plain}
\newcommand{\p}{{\mathbb P}}
\newcommand{\q}{{\mathbb Q}}
\def\Re{\mathrm{Re}}
\newcommand{\jap}[1]{\left\langle #1 \right\rangle}
\newcommand{\norm}[1]{\left\lVert #1 \right\rVert}
\newcommand{\R}{\mathbb{R}}
\newcommand{\T}{\mathbb{T}}
\theoremstyle{plain}
\newtheorem{theorem}{Theorem}[section]
\newtheorem{lemma}[theorem]{Lemma}
\newtheorem{remark}[theorem]{Remark}
\numberwithin{equation}{section}
\begin{document}

\title[stability analysis of the compressible fluid ]{Linear stability of the   Couette flow for the  non-isentropic compressible fluid}

\author[Xiaoping Zhai]{Xiaoping Zhai}

\address[X. Zhai]{ School of Mathematics and Statistics, Shenzhen University, Shenzhen, 518060, China.} \email{zhaixp@szu.edu.cn }

\begin{abstract}
We are concerned with  the
 linear stability  of the   Couette flow for the   non-isentropic compressible Navier-Stokes equations with vanished shear viscosity in a domain $\mathbb{T}\times \mathbb{R}$. For a general initial data settled in Sobolev spaces, we obtain a Lyapunov type instability of the  density, the temperature, the compressible part of the velocity field, and also obtain  an inviscid damping for the incompressible part of the velocity field. Moreover, if the initial  density, the initial temperature and the incompressible part of the initial velocity field satisfy some quality relation, we can prove the enhanced dissipation phenomenon for the  velocity field.
	\end{abstract}
\maketitle
%\noindent {\bf Key Words:}
%{ Stability; Compressible Navier-Stokes equations; Plane Couette flow}

%\noindent {\bf Mathematics Subject Classification (2010)} {35Q31, 35Q35, 76N99}
\section{ Introduction and the main result}
In this paper, we are interested in the long-time asymptotic behaviour of the linearized two dimensional  non-isentropic compressible Navier-Stokes equations in a domain $\mathbb{T}\times \mathbb{R}$. The governing equations (in non-dimensional variables) are
\begin{eqnarray}\label{mmma}
\left\{\begin{aligned}
&\varrho_t+{\mathbf{u}}\cdot\nabla {\varrho}+{\varrho}\div{\mathbf{u}}=0,\\
&{{\varrho}}({\mathbf{u}_t}+{\mathbf{u}}\cdot\nabla {\mathbf{u}})
+\frac{1}{\gamma  {M}^2}\nabla P=\frac{1}{\Re}\left(\mu\Delta\mathbf{u}+(\nu+\mu)\nabla \div\mathbf{u}\right),\\
&{{\varrho}}({\vartheta_t}+{\mathbf{u}}\cdot\nabla \vartheta)
+ (\gamma-1)P\div {\mathbf{u}}=\!\frac{\gamma\mu}{\sigma \Re}\Delta\vartheta+\!\frac{\gamma(\gamma-1)M^2}{\Re}\left(\frac{\mu}{2}|\nabla {\mathbf u}+\!\nabla {\mathbf u}^\top|^2+\nu|\div {\mathbf u}|^2\right).
\end{aligned}\right.
\end{eqnarray}
Here $t\ge0$ is time, $(x,y)\in\mathbb{T}\times \mathbb{R}$ is the spatial coordinate and $\mathbb{T}=\mathbb{R}/\mathbb{Z}$.
 The unknown ${\mathbf{u}}$ is the velocity vector, ${{\varrho}}$ is the density, $\vartheta$ is the
temperature, $P={\varrho}\vartheta$ is the pressure.  $\gamma>1$ is the ratio of specific heats, $M>0$ is the Mach number of the reference state, $\Re>0$ is the Reynolds number,  and $\sigma>0$ is the Prandtl number. The two constant viscosity coefficients
$\mu$  and  $\nu$ are the shear viscosity and the  volume viscosity respectively.
The equations \eqref{mmma} then express respectively the conservation of mass, the
balance of momentum, and the balance of energy under internal pressure, viscosity
forces, and the conduction of thermal energy.

A comprehensive understanding of the stability of compressible or incompressible shear flows is a
fundamental problem in fluid mechanics and has been the subject of both theoretical
and practical interest in astrophysics and engineering,
see \cite{dole}, \cite{doln}, \cite{abc12+1}--\cite{cha2}, \cite{dra}--\cite{abc19}, \cite{mal}, \cite{mar}--\cite{abcde11}, \cite{zenglan}, \cite{zhaixiaoping} for the compressible fluid
and  \cite{bed2017}--\cite{abc12}, \cite{abc15}--\cite{dengwen}, \cite{abcde10}, \cite{mas}, \cite{wei1}--\cite{abcde12}
for incompressible fluid.
The aim of the present paper is to study the long-time asymptotic behaviour of the linearized   non-isentropic compressible Navier-Stokes equations around the  Couette flow. That is we  seek a stationary solution of \eqref{mmma} with a constant mean pressure which have the following form:
\begin{align}\label{m2}
\varrho_{sh}=\varrho_{sh}(y),\quad{\mathbf{u}}_{sh}=\begin{pmatrix}
				y\\ 0
			\end{pmatrix},\quad \vartheta_{sh}=\vartheta_{sh}(y), \quad\hbox{with}\quad\varrho_{sh}(y)\vartheta_{sh}(y)=1.
\end{align}
Obviously, when $\mu\neq0,$ due to the strong nonlinear term $|\nabla {\mathbf u}+\nabla {\mathbf u}^\top|^2$ appeared in the third equation of \eqref{mmma}, it is straightforward to verify
that $\vartheta_{sh}(y)$ mush satisfy the following restricted relation:
\begin{align}\label{m3}
\frac{\gamma\mu}{\sigma \Re}\partial_{yy}\vartheta_{sh}(y)=-\frac{\gamma\mu(\gamma-1)M^2}{\Re}.
\end{align}
Solve \eqref{m3}, we can choose $\vartheta_{sh}(y)$ as
\begin{align}\label{m4}
\vartheta_{sh}(y)=\vartheta_r\left[r+(1-r)y-(1-\frac{1}{\vartheta_r})y^2\right]
\end{align}
where $r>0$ is the temperature ratio and $\vartheta_r$ is the recovery temperature defined as follows
\begin{align*}%\label{qm5}
\vartheta_r \stackrel{\mathrm{def}}{=}1+\frac{(\gamma-1)\sigma M^2}{2}.
\end{align*}
Due to the complicate form of $\vartheta_{sh}(y)$,
 to study the long-time asymptotic behaviour of \eqref{mmma} around the   stationary solution defined in \eqref{m2} and \eqref{m4} is a very difficult problem. To  our best knowledge, there are few results in this direction, see \cite{dole}, \cite{doln}, \cite{abc12+1}, \cite{abc13}, \cite{cha}, \cite{deh94}--\cite{abc19}, \cite{pad}, \cite{abcde11} .

Due to mathematical challenges,
to approach the problem, here, we are only consider a simple case of \eqref{mmma} with the shear viscosity coefficient $\mu=0$ and the  volume viscosity $\nu\neq0$.
 In this case, the system \eqref{mmma} can be rewritten as
\begin{eqnarray}\label{qm6}
\left\{\begin{aligned}
&\varrho_t+{\mathbf{u}}\cdot\nabla {\varrho}+{\varrho}\div{\mathbf{u}}=0,\\
&{{\varrho}}({\mathbf{u}_t}+{\mathbf{u}}\cdot\nabla {\mathbf{u}})+\frac{1}{\gamma{M}^2}\nabla ({\varrho}\vartheta)=\frac{\nu}{\Re}\nabla \div\mathbf{u},\\
%&P={\varrho}\vartheta,\\
&{{\varrho}}({\vartheta_t}+{\mathbf{u}}\cdot\nabla \vartheta)
+ (\gamma-1){\varrho}\vartheta\div {\mathbf{u}}=\frac{\nu\gamma(\gamma-1)M^2}{\Re}|\div {\mathbf u}|^2.
\end{aligned}\right.
\end{eqnarray}
It's straightforward to verify
that
the Couette flow,
\begin{align}\label{qm7}
\varrho_{sh}=1,\quad{\mathbf{u}}_{sh}=\begin{pmatrix}
				y\\ 0
			\end{pmatrix},\quad \vartheta_{sh}=1,
\end{align}
is  a stationary solution of \eqref{qm6}.
 Our goal is to understand the stability and large-time behavior of perturbations near this Couette flow.

Before presenting our main result,
 let us first give a short review of the  extensive mathematical results on  the compressible Navier-Stokes equations.
Glatzel \cite{gla88}, \cite{abc17} studied the linear inviscid and viscous stability properties of the compressible  Couette flow via a normal mode analysis in simplified flow model with constant viscous coefficients and a constant density profile. Duck {\it et al.} \cite{deh94} proved  the linear stability of the plane Couette flow for the non-isentropic compressible Navier-Stokes equations. Chagelishvili {\it et al.} \cite{cha} considered the inviscid stability of the 2D Couette flow. By means of some formal approximation, they showed that the energy of acoustic perturbations grows linear in time due to the transfer of energy from the mean flow to perturbations.
Taking advantage of a fourth-order finite-difference method and a
spectral collocation method, Hu {\it et al.} \cite{abc18} studied the viscous linear stability of supersonic Couette flow for
a perfect gas governed by Sutherland viscosity law.
Kagei \cite{kagg11} proved that the plane Couette flow in an infinite layer is asymptotically stable if the Reynolds and Mach numbers are sufficiently small.  Li {\it et al.} \cite{abc19} investigated
 the stability analysis of the plane Couette flow for the 3D compressible Navier-Stokes equations with Navier-slip boundary condition at the bottom boundary. They shown that the plane Couette flow is asymptotically stable for small perturbation provided that the slip length, Reynolds and Mach numbers satisfy some restricted relation.
 Recently, Antonelli {\it et al.} \cite{dole} studied the linear stability properties of the 2D isentropic
compressible Euler equations linearized around a shear flow given by a monotone profile,
close to the Couette flow, with constant density, in the domain $\mathbb{T}\times \mathbb{R}$.
 Later then, they in \cite{doln} also  studied the linear stability properties of perturbations around the
homogeneous Couette flow for a 2D isentropic inviscid or viscous compressible fluid. Moreover, in the inviscid case, they proved the inviscid damping for the solenoidal
component of the velocity field and  Lyapunov type instability for the density and the
irrotational component of the velocity field. In the viscous case, they  obtained the enhanced dissipation phenomenon.
Zeng {\it et al.} \cite{zenglan} considered the linear stability of the three dimensional isentropic compressible Navier-Stokes equations on $\mathbb{T}\times\mathbb{R}\times\mathbb{T}$. They proved the enhanced dissipation phenomenon and the lift-up phenomenon  around the Couette flow $(y, 0, 0)^\top$.
 The motivation of the present paper is to generalize the results obtained by Antonelli {\it et al.} \cite{dole}, \cite{doln} to the non-isentropic compressible  Navier-Stokes equations with vanished shear viscosity.

Denote
\begin{equation*}
		{\rho}={\varrho}-\varrho_{sh},\qquad
		{\mathbf{v}}={\mathbf{u}}-{\mathbf{u}}_{sh},\qquad{{\theta}}=\vartheta-\vartheta_{sh}.
	\end{equation*}
The linearized system of \eqref{qm6} around the  Couette flow \eqref{qm7} read as follows
\begin{eqnarray}\label{m5}
\left\{\begin{aligned}
&{\partial_t} {\rho} +y{\partial_x}{\rho}+\div {\mathbf{v}}=0,\\
&{\partial_t} {\mathbf{v}}+y{\partial_x} {\mathbf{v}}+\begin{pmatrix}
				v^y\\ 0
			\end{pmatrix}+\frac{1}{\gamma {M}^2} (\nabla{\rho}+\nabla{{\theta}})= \nu\nabla \div\mathbf{v},\\
&{\partial_t} {{\theta}}+y{\partial_x} {{\theta}}+(\gamma-1)\div{\mathbf{v}}=0.
\end{aligned}\right.
\end{eqnarray}
Before going into details of our theorem, we introduce several notations.
Define
\begin{equation*}
		\alpha=\div {\mathbf{v}}, \qquad \omega=\nabla^\perp\cdot {\mathbf{v}}, \quad \hbox{with\quad $\nabla^\perp=(-{\partial_y},{\partial_x})^\top$},
	\end{equation*}
according to
	the Helmholtz projection operators,  we have
	\begin{align}\label{m6}
{\mathbf{v}}=(v^x,v^y)^\top\stackrel{\mathrm{def}}{=}\p [{\mathbf{v}}]+\q [{\mathbf{v}}]
\end{align}
 with
 \begin{align}\label{m7}
\p [{\mathbf{v}}]\stackrel{\mathrm{def}}{=}\nabla^\perp \Delta^{-1}\omega, \qquad\q [{\mathbf{v}}]\stackrel{\mathrm{def}}{=}\nabla \Delta^{-1}\alpha.
\end{align}
From the above definition, one can infer that
\begin{equation}\label{m9}
		v^y={\partial_y}(\Delta^{-1})\alpha+{\partial_x}(\Delta^{-1})\omega,
	\end{equation}
hence, we  can rewrite \eqref{m5}  in terms of $(\rho,\alpha,\omega,{\theta})$ that
\begin{eqnarray}\label{m10}
\left\{\begin{aligned}
&{\partial_t} \rho +y{\partial_x}\rho+\alpha=0,\\
&{\partial_t} \alpha+y{\partial_x} \alpha+2{\partial_x} ({\partial_y}(\Delta^{-1})\alpha+{\partial_x}(\Delta^{-1})\omega)+\frac{1}{\gamma{M}^2}(\Delta\rho+\Delta\theta)=\nu\Delta \alpha,\\
&	{\partial_t} \omega+y{\partial_x} \omega-\alpha=0,\\	
&{\partial_t} {\theta} +y{\partial_x}{\theta}+(\gamma-1)\alpha=0.
\end{aligned}\right.
\end{eqnarray}
Obviously, the above system \eqref{m10} is a closed system regarding of $(\rho,\alpha,\omega,{\theta})$.

Let
	\begin{align*}
		&\widehat{f}(k,\eta)=\frac{1}{2\pi}\iint_{\T\times\mathbb{R}}e^{-i(kx+\eta y)}f(x,y)\,dxdy,
	\end{align*}
and
\begin{align*}
		f(x,y)=\frac{1}{2\pi}\sum_{k}\int_{\mathbb{R}}e^{i(kx+\eta y)}\widehat{f}(k,\eta)\,d\eta,
	\end{align*}
	then we define  $f\in H^{s}(\T\times \R)$ if
	\begin{equation*}
		\norm{f}_{H^{s}}^2=\sum_k\int \langle k,\eta\rangle^{2s} |\hat{f}|^2(k,\eta)\,d\eta< +\infty.
	\end{equation*}

Now, we can state the main result of the present paper.
 \begin{theorem}\label{dingli}
 Let $\gamma>1$, $0<\nu<1$ and  $0<M\le\nu^{-1}$. Assume that $(\rho^{in},\alpha^{in},\omega^{in},\theta^{in})\in H^{\frac{3}{2}}(\T\times\R)$ is the initial data of \eqref{m10} with
\begin{align}\label{lingmo}
\int_\mathbb{T} \rho_{in} \,dx=\int_\mathbb{T} \alpha_{in} \,dx=\int_\mathbb{T} \omega_{in} \,dx=\int_\mathbb{T} {\theta}_{in} \,dx=0.
\end{align}
Then, there exists a positive constant $C$ independent of $\gamma, \nu, M$ such that
 \begin{align*}%\label{}
			\norm{\p[{\mathbf{v}}]^x(t)}_{L^2}
\le&C\langle t \rangle^{-\frac12}{\gamma^{-1} }\exp(CM(M+1)\nn\\
&\qquad\times\left(\frac{1}{ M}\norm{{\rho}^{in}+{\theta}^{in}}_{H^{\frac32}}+\norm{{\alpha}^{in}}_{H^{\frac32}}
+\gamma\norm{\omega^{in}}_{H^{\frac32}}\right),\nn\\
\norm{\p[{\mathbf{v}}]^y(t)}_{L^2}
\le&C\langle t \rangle^{-\frac32}{\gamma^{-1} }\exp(CM(M+1)\nn\\
&\qquad\times\left(\frac{1}{ M}\norm{{\rho}^{in}+{\theta}^{in}}_{H^{\frac32}}+\norm{{\alpha}^{in}}_{H^{\frac32}}
+\gamma\norm{\omega^{in}}_{H^{\frac32}}\right),
		\end{align*}
and
\begin{align*}
	&\norma{\q[{\mathbf{v}}](t)}{L^2}+\frac{\gamma}{{M}}\norma{\rho(t)}{L^2}+\frac{\gamma}{{M}}\norma{{\theta}(t)}{L^2}
\nonumber\\
&\quad\le C\langle t \rangle^{\frac12}\Bigg\{\left\|\frac{(\gamma-1)\rho^{in}-{\theta}^{in}}{{M}}\right\|_{{L^2}}
+ {(\gamma+1)}\exp(CM(M+1)\nonumber\\
&\qquad\qquad\qquad\qquad
 \times\left(\frac{1}{ M}\norm{{\rho}^{in}+{\theta}^{in}}_{H^{1}}+\norm{{\alpha}^{in}}_{H^{1}}
+\gamma\norm{\omega^{in}}_{H^{1}}\right)\Bigg\}.
\end{align*}
Moreover, if $\rho^{in}, \theta^{in}, \omega^{in}$ additionally satisfy the following relation
	\begin{align}\label{xianzhiguanxi}
\rho^{in}+\gamma\omega^{in}+\theta^{in}=0,
\end{align}
we can obtain the enhanced dissipation for the velocity field
\begin{align*}		
\norm{\p[{\mathbf{v}}]^x(t)}_{L^2}
&\le C\langle t \rangle^{-\frac12}e^{-\frac{1}{16}\nu^{\frac13}t}\exp(CM(M+1) )\left(\norm{{\alpha}^{in}}_{H^{\frac32}}+\frac{1}{ M}\norm{{\omega}^{in}}_{H^{\frac32}}\right),\nn\\
\norma{\p[{\mathbf{v}}]^y(t)}{L^2}
&\le C{{\langle t\rangle}^{-\frac32}}e^{-\frac{1}{16}\nu^{\frac13}t}\exp(CM(M+1) )\left(\norm{{\alpha}^{in}}_{H^{\frac32}}+\frac{1}{ M}\norm{{\omega}^{in}}_{H^{\frac32}}\right),\nn\\
\norma{\q[{\mathbf{v}}](t)}{L^2}+&\frac{1}{{M}}\norma{\rho(t)+{\theta}(t)}{L^2}\nn\\
&\le C\langle t \rangle^{\frac12} e^{-\frac{1}{32}\nu^\frac13 t}(1+\gamma)\exp(CM(M+1) )\left(\norm{{\alpha}^{in}}_{H^{1}}+\frac{1}{ M}\norm{{\omega}^{in}}_{H^{1}}\right).
		\end{align*}
 \end{theorem}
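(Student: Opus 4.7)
I plan to work in Fourier variables and pass to the moving frame $\tilde f(t,k,\eta):=\hat f(t,k,\eta-kt)$, which removes the transport operator $\partial_t+y\partial_x$ and reduces \eqref{m10} to a family of ODEs in $t$ parametrised by the wave numbers $(k,\eta)$. All symbols will then be expressed through $|\hat k(t)|^2:=k^2+(\eta-kt)^2$ and the drift $\beta(t):=2k(\eta-kt)/|\hat k(t)|^2$. The zero-mean assumption \eqref{lingmo} kills the $k=0$ mode, so $|\hat k(t)|^2>0$ throughout.

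\textbf{Hidden conservation law and reduction to a wave system.} The decisive algebraic observation is that summing the $\rho$-equation, $\gamma$ times the $\omega$-equation, and the $\theta$-equation in \eqref{m10} yields $(\partial_t+y\partial_x)(\rho+\gamma\omega+\theta)=0$, so $Q:=\rho+\gamma\omega+\theta$ is a frozen quantity in the moving frame; this is exactly the object whose vanishing is imposed in \eqref{xianzhiguanxi} to unlock enhanced dissipation. Setting $P:=\rho+\theta$, using $(\partial_t+y\partial_x)P=-\gamma\alpha$, and substituting $\tilde\omega=\gamma^{-1}(\tilde Q_{\mathrm{in}}-\tilde P)$ in the $\alpha$-equation, I obtain a closed $2\times 2$ system in the moving frame,
\begin{equation*}
\partial_t \tilde P = -\gamma\tilde\alpha,\qquad
\partial_t \tilde\alpha + \beta(t)\tilde\alpha + \nu|\hat k|^2\tilde\alpha - \Bigl(\tfrac{|\hat k|^2}{\gamma M^2}+\tfrac{2k^2}{\gamma|\hat k|^2}\Bigr)\tilde P = -\tfrac{2k^2}{\gamma|\hat k|^2}\tilde Q_{\mathrm{in}},
\end{equation*}
a damped, forced acoustic oscillator with time-dependent stiffness.

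\textbf{Energy estimate and physical bounds.} The heart of the argument is a weighted quadratic energy
\begin{equation*}
\mathcal E(t)=A(t)|\tilde\alpha|^2+B(t)|\tilde P|^2+C(t)\,\Re\bigl(\tilde\alpha\,\overline{\tilde P}\bigr),
\end{equation*}
with $A,B,C$ and a small cross corrector chosen so that the symmetric wave-coupling cancels at leading order and the drift $\beta(t)$ is absorbed across the critical time $t=\eta/k$; this will follow the symmetrisation template of Antonelli--Dolce--Marcati in the isentropic case, adapted to the Mach-dependent stiffness $1/(\gamma M^2)$. Gr\"onwall's inequality on $\mathcal E$ will then yield uniform-in-$(k,\eta)$ bounds with the explicit prefactor $\exp(CM(M+1))$. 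To translate into the physical variables, I use the Helmholtz symbols: $\p[\mathbf v]^x$ and $\p[\mathbf v]^y$ have Fourier multipliers $(\eta-kt)/|\hat k|^2$ and $-k/|\hat k|^2$ acting on $\tilde\omega=\gamma^{-1}(\tilde Q_{\mathrm{in}}-\tilde P)$, which enjoy the pointwise decay $\jap{t}^{-1}\jap{k,\eta}$ and $\jap{t}^{-2}\jap{k,\eta}^2$; pairing against $H^{3/2}$ data gives the $\jap{t}^{-1/2}$ and $\jap{t}^{-3/2}$ inviscid-damping rates in $L^2$. The $\jap{t}^{1/2}$ growth of $\q[\mathbf v]$, $\rho$ and $\theta$ arises by integrating in time the frozen forcing by $\tilde Q_{\mathrm{in}}$ in the wave energy, tempered by the combination $(\gamma-1)\rho^{in}-\theta^{in}$ that appears as the first term in the bound.

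\textbf{Enhanced dissipation and main obstacle.} Under \eqref{xianzhiguanxi} the frozen quantity $\tilde Q_{\mathrm{in}}\equiv 0$ and the oscillator becomes homogeneous, so the viscous damping $\nu|\hat k|^2\tilde\alpha$ becomes effective. The standard lower bound $\int_0^t \nu|\hat k(s)|^2\,ds\gtrsim \nu^{1/3}t$, uniform in $(k,\eta)$ after a short passage through the critical time, converts the energy estimate into the exponential decay $e^{-\nu^{1/3}t/16}$ claimed in the theorem. The principal technical obstacle is the construction of $\mathcal E(t)$ itself: it must simultaneously (i) symmetrise a coupling whose stiffness is large in the admitted Mach regime $M\le\nu^{-1}$ without losing powers of $M$, (ii) absorb the sign-changing drift $\beta(t)$ through the critical time without contaminating the inviscid-damping rates, and (iii) preserve the $\nu^{1/3}$ dissipative gain in the constrained case. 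Carefully tracking the constants through these three competing requirements is what produces the explicit $\exp(CM(M+1))$ prefactor and constitutes the bulk of the proof.
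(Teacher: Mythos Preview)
Your strategy is essentially the paper's: the same moving-frame reduction, the same conserved quantity $\rho+\gamma\omega+\theta$, the same closed $2\times2$ wave system (the paper writes it in $\Phi=(\rho+\theta)/\gamma$ rather than your $P=\rho+\theta$, a harmless rescaling), and the same passage from the frequency-wise energy to physical bounds via the Helmholtz symbols. Two points, however, are left vague in your sketch and are where the real choices lie.

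First, the weights $A(t),B(t)$ are not free. The paper takes $Z_1=M^{-1}m^{-1}p^{-1/4}\widehat{\Phi}$ and $Z_2=m^{-1}p^{-3/4}\widehat{A}$, with $p=|\hat k|^2$; the exponents $-1/4$ and $-3/4$ are what make the principal coupling $\pm M^{-1}p^{1/2}\,\mathrm{Re}(\bar Z_1Z_2)$ exactly antisymmetric, and the residual drift is then cancelled by the specific cross term $\tfrac{M}{4}\tfrac{\partial_t p}{p^{3/2}}\mathrm{Re}(\bar Z_1Z_2)$. The integrability $\int_0^\infty k^2/p\,dt\le\pi$ is what produces the $\exp(CM(M+1))$ factor via Gr\"onwall. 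Your description of this as ``symmetrisation template'' is correct in spirit, but the precise powers are what make it close.

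Second, your mechanism for enhanced dissipation is not the one that works here. The bound $\int_0^t\nu|\hat k(s)|^2\,ds\gtrsim\nu^{1/3}t$ is false uniformly in $(k,\eta)$ for small $t$ (it only gives $\nu t$), and in any case the viscous term acts only on $\tilde\alpha$, never on $\tilde P$. The paper instead introduces the ghost multiplier $m$ with $-\partial_t m/m+\nu p\ge\nu^{1/3}$ and a second, $\nu^{1/3}$-weighted cross term $-\tfrac{M\nu^{1/3}}{4}p^{-1/2}\mathrm{Re}(\bar Z_1Z_2)$; differentiating this cross term is what manufactures the missing damping $\tfrac{\nu^{1/3}}{4}|Z_1|^2$ on the non-dissipative component, i.e.\ a hypocoercivity step. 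Your item (iii) hints that you know something like this is needed, but the sentence attributing the gain to the time integral of $\nu|\hat k|^2$ should be replaced by this hypocoercive construction.
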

\begin{remark}
At first glance, the enhanced dissipation  phenomenon  of the  velocity  field is some surprising because of there is only dissipation for the compressible part of the velocity. This mainly benefits from the relation \eqref{xianzhiguanxi}  which  gives rise to $\omega=-\frac{1}{\gamma}(\rho+\theta)$. The special  relation
connects compressible and incompressible phenomena. Namely, an increase of the vorticity need to be compensated by a decrease for the density and the temperature.
\end{remark}
\begin{remark}
In \cite{dole},  Antonelli {\it et al.} studied the linear stability properties of the 2D isentropic
compressible Euler equations linearized around a shear flow given by a monotone profile,
close to the Couette flow, with constant density, in the domain $\mathbb{T}\times \mathbb{R}$. For the non-isentropic compressible fluid, how to obtain a similar result is  an interesting problem. This is left in the future work.
\end{remark}

\section{The proof of the main theorem}
\subsection{ Preliminary and the a priori estimates}
First of all, we are concerned with the dynamics of the $x$-averages of the perturbations.
In order to
reveal the distinction between
the zero mode case $k=0$ and the nonzero modes $k\not =0$. We define
\begin{align*}
f_0(y)\stackrel{\mathrm{def}}{=}\frac{1}{2\pi}\int_{\mathbb T}f(x,y)  \,dx, \qquad f_{\not =}(x,y) \stackrel{\mathrm{def}}{=}f(x,y) -f_0(y),
\end{align*}
which represents the projection onto $0$ frequency and the projection onto non-zero frequencies.

Due to the structure of the Couetee flow and the fact that the equations are
	linear, it is clear that the zero mode in $x$ has an independent dynamics with respect to other modes. Consequently, in our analysis we can decouple the evolution of the $k = 0$ mode from the rest of the perturbation.
Integration in $x$ equations in \eqref{m10}, one infer that
\begin{eqnarray}\label{ping1234}
\left\{\begin{aligned}
    &{\partial_t} \rho_0=-\alpha_0,\\
		&{\partial_t} \alpha_0=-\frac{1}{\gamma{M}^2}{\partial_{yy}} \rho_0-\frac{1}{\gamma{M}^2}{\partial_{yy}} {\theta}_0+\nu{\partial_{yy}} \alpha_0,\\
		&{\partial_t} \omega_0=\alpha_0,\\
&{\partial_t} {\theta}_0=-(\gamma-1)\alpha_0.
\end{aligned}\right.
\end{eqnarray}
From the above equation \eqref{ping1234}, we can further get $\alpha_0, \rho_0+{\theta}_0$ satisfy the following damped wave equations:
\begin{equation}\label{ping2+1}
		\partial_{tt} \alpha_0-\nu\partial_{t}{\partial_{yy}} \alpha_0-\frac{1}{{M}^2}{\partial_{yy}}\alpha_0=0, \qquad \text{in }\R,
	\end{equation}
\begin{equation}\label{ping3+1}
		\partial_{tt} (\rho_0+{\theta}_0)-\frac{1}{{M}^2}{\partial_{yy}}(\rho_0+{\theta}_0)=0, \qquad \text{in }\R.
	\end{equation}
Hence, given $\rho_0^{in}=\alpha_0^{in}={\theta}_0^{in}=\omega_0^{in}=0$,  we can get for all $t\ge0$
$$\rho_0(t)=\alpha_0(t)={\theta}_0(t)=\omega_0(t)=0.$$

Consequently, in our analysis we can decouple the evolution of the $k = 0$ mode from the rest of the perturbation.
Let us consider the following coordinate transform
\begin{align*}%\label{ping4}
\left(
  \begin{array}{c}
    x \\
    y \\
  \end{array}
\right)
\mapsto
\left(
  \begin{array}{c}
    X \\
    Y \\
  \end{array}
  \right)=
  \left(
  \begin{array}{c}
    x-yt \\
    y \\
  \end{array}
  \right).
\end{align*}
Under the new coordinate transform,
 the differential operators change as follows
	\begin{align*}
			{\partial_x}={\partial_X},\quad {\partial_y} = {\partial_Y}-t{\partial_X},\quad
			\Delta= \Delta_L\stackrel{\mathrm{def}}{=}{\partial_{XX}}+({\partial_Y}-t{\partial_X})^2.
	\end{align*}
Define
\begin{align*}
R(t,X,Y)&=\rho(t,X+tY,Y),\quad
			A(t,X,Y)=\alpha(t,X+tY,Y),\\
			\Omega(t,X,Y)&=\omega(t,X+tY,Y),\quad
\Theta(t,X,Y)={\theta}(t,X+tY,Y).
\end{align*}
Then, the linear system \eqref{m10} reduces to the following system in the new coordinates
\begin{eqnarray}\label{hao}
\left\{\begin{aligned}
&{\partial_t} R=-A,\\
&{\partial_t} A=\nu\Delta_LA-2{\partial_X}({\partial_Y}-t{\partial_X})(\Delta_L^{-1})A
-2{\partial_{XX}}(\Delta_L^{-1})\Omega-\frac{1}{\gamma{M}^2}(\Delta_LR+\Delta_L{\Theta}),\\
&{\partial_t} \Omega=A,\\
&{\partial_t} {\Theta}=-(\gamma-1)A.
\end{aligned}\right.
\end{eqnarray}
We want to analyze the system \eqref{hao} on the frequency space, in analogy with respect to
the incompressible Couette flow. So
we define the symbol associated to $-\Delta_L$ as
	\begin{align*}%\label{you7}
		p(t,k,\eta)&=k^2+(\eta-kt)^2,
	\end{align*}
and denote the symbol associated to the operator  $2{\partial_X}({\partial_Y}-t{\partial_X})$
	as
	\begin{align*}%\label{you8}
		({\partial_t} p)(t,k,\eta)=-2k(\eta-kt).
	\end{align*}
In the moving frame,  for the Laplacian operator, there holds the following inequalities.
\begin{lemma}\label{you14}
Let $p=-\widehat{\Delta}_L=k^2+(\eta-kt)^2$, then for any function $f\in H^{s+2{\beta}}(\mathbb{T}\times\mathbb{R})$, it holds that
	\begin{equation}\label{you15}
	\norma{p^{-{\beta}}f}{H^s}\le C \frac{1}{\langle t \rangle^{2{\beta}}}\norma{f}{{H^{s+2{\beta}}}},\qquad \norma{p^{{\beta}} f}{H^s}\le C \langle t \rangle^{2{\beta}} \norma{f}{{H^{s+2{\beta}}}},
	\end{equation}
	for any ${\beta}>0$.
\end{lemma}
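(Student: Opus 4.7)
The plan is to move to the Fourier side via Plancherel and reduce each inequality to a pointwise bound on the multiplier symbol. Writing $\widehat{p^{\pm\beta}f}(k,\eta)=p(t,k,\eta)^{\pm\beta}\hat f(k,\eta)$, the claim becomes
\[
\sum_k\int \langle k,\eta\rangle^{2s}\, p(t,k,\eta)^{\pm2\beta}\,|\hat f(k,\eta)|^2\,d\eta \;\le\; C\,\langle t\rangle^{\pm 4\beta}\sum_k\int \langle k,\eta\rangle^{2s+4\beta}|\hat f(k,\eta)|^2\,d\eta,
\]
so it suffices to prove, for $k\in\mathbb Z\setminus\{0\}$ and $\eta\in\mathbb R$, the symbol estimates
\[
\langle t\rangle^{2}\;\le\;C\,p(t,k,\eta)\,\langle k,\eta\rangle^{2},\qquad p(t,k,\eta)\;\le\;C\,\langle t\rangle^{2}\langle k,\eta\rangle^{2}.
\]
The restriction $k\neq 0$ is legitimate because the zero mode has already been shown to vanish identically from the reductions leading to \eqref{ping2+1}--\eqref{ping3+1}, so $f$ may be assumed to have $\hat f(0,\eta)\equiv 0$.

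The upper bound is immediate: expanding $p=k^2+(\eta-kt)^2\le k^2+2\eta^2+2k^2t^2$, one has $p\le 3\langle k,\eta\rangle^{2}\langle t\rangle^{2}$, and raising to the power $\beta$ yields the second inequality of \eqref{you15}.

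For the lower bound I would split into two cases. If $|\eta-kt|\ge t/2$, then directly $p\ge(\eta-kt)^2\ge t^{2}/4$, while $p\ge k^2\ge 1$ covers the constant $1$ in $\langle t\rangle^{2}=1+t^{2}$, so $p\,\langle k,\eta\rangle^{2}\gtrsim \langle t\rangle^{2}$. If instead $|\eta-kt|<t/2$, then, using $|k|\ge 1$,
\[
|\eta|\ge |kt|-|\eta-kt|\ge t-t/2=t/2,
\]
hence $\langle k,\eta\rangle^{2}\gtrsim t^{2}$ while $p\ge k^2\ge 1$, giving again $p\,\langle k,\eta\rangle^{2}\gtrsim \langle t\rangle^{2}$. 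Raising to the power $\beta$ yields the first inequality.

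The only subtle point — and the place where care is needed — is precisely the lower-bound case analysis at the threshold $|\eta-kt|\sim t/2$, and the use of $|k|\ge 1$ to absorb the loss near $\eta\approx kt$ by trading it for largeness of $|\eta|$. This is a standard observation in the moving-frame (Couette) analysis and constitutes the whole content of the lemma; everything else is algebraic bookkeeping on the Fourier side.
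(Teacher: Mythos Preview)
Your proof is correct and follows essentially the same route as the paper: reduce via Plancherel to the pointwise symbol estimates $\langle t\rangle^{2}\lesssim p\,\langle k,\eta\rangle^{2}$ and $p\lesssim\langle t\rangle^{2}\langle k,\eta\rangle^{2}$, and verify these elementarily for $k\neq 0$. The paper compresses your case analysis into the single Peetre-type inequality $\langle k,\eta\rangle\le C\langle \eta-\xi\rangle\langle k,\xi\rangle$ (applied with $\xi=kt$, together with $|k|\ge 1$), which is exactly the content of your split on $|\eta-kt|\gtrless t/2$; your explicit justification of the $k\neq 0$ restriction via the vanishing zero mode is a useful clarification the paper leaves implicit.
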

\begin{proof}
	The bound \eqref{you15} follows just by Plancherel Theorem and the basic inequalities for japanese brackets $\langle k,\eta 	\rangle \le C \langle \eta-\xi \rangle \langle k,\xi \rangle$.
\end{proof}

Taking the  Fourier transform of \eqref	{hao} gives rise to
\begin{eqnarray}\label{ahao}
\left\{\begin{aligned}
&{\partial_t}\widehat R=-\widehat A,\\
&{\partial_t} {\widehat{A}}=-\nu p {\widehat{A}}+\frac{{\partial_t} p}{ p}{\widehat{A}}-\frac{2k^2}{ p}\widehat{\Omega}+\frac{ p}{{\gamma M}^2}({\widehat{R}+\widehat{\Theta}}),\\
&{\partial_t} \widehat\Omega=\widehat A,\\
&{\partial_t} \widehat{\Theta}=-(\gamma-1)\widehat A.
\end{aligned}\right.
\end{eqnarray}
To exploit the special structure of the system \eqref{ahao}, we introduce the good unknowns $\Phi$  as
\begin{align}\label{you6}
\Phi=\frac{R+{\Theta}}{\gamma}
\end{align}
from which we can rewrite \eqref{ahao} into
\begin{eqnarray}\label{han}
\left\{\begin{aligned}
&{\partial_t} {\widehat{\Phi}}=-{\widehat{A}},\\
&{\partial_t} {\widehat{A}}=-\nu p {\widehat{A}}+\frac{{\partial_t} p}{ p}{\widehat{A}}-\frac{2k^2}{ p}\widehat{\Omega}+\frac{ p}{{M}^2}{\widehat{\Phi}}.
\end{aligned}\right.
\end{eqnarray}
In order to break through the barrier involve in the term $\Omega$ in \eqref{han}, we deduce from
$${\partial_t}( R +\gamma \Omega+{\Theta})=0
$$
that there holds
\begin{align*}%\label{han1}
 R +\gamma \Omega+{\Theta}=R^{in}+\gamma\Omega^{in}+\Theta^{in}.
\end{align*}
Hence, combining with \eqref{you6} leads to
\begin{align}\label{han2}
& \Omega=\Phi^{in}+\Omega^{in}-\Phi.
\end{align}
Substituting \eqref{han2} into \eqref{han},  we get a closed system only involved in $\widehat{\Phi}, \widehat{A}$ other than the initial data
\begin{eqnarray}\label{han3}
\left\{\begin{aligned}
&{\partial_t} {\widehat{\Phi}}=-{\widehat{A}},\\
&{\partial_t} {\widehat{A}}=-\nu p {\widehat{A}}+\frac{{\partial_t} p}{ p}{\widehat{A}}+(\frac{ p}{{M}^2}+\frac{2k^2}{ p}){\widehat{\Phi}}-\frac{2k^2}{ p}(\widehat\Phi^{in}+\widehat\Omega^{in}).
\end{aligned}\right.
\end{eqnarray}
In the following, to obtain the enhanced dissipation, we introduce the ``ghost multiplier'' which has been used in \cite{bed2017}, \cite{bed2019}.

Let
 multiplier $m$ solve the linear ODE for $k \neq 0$:
\begin{align*}
 &\frac{\partial_t{m}}{m} = - \frac{\nu^{1/3}}{\left[ \nu^{1/3} |t-\frac{\eta}{k} | \right]^{2} + 1} \\
&m(0,k,\eta)  = 1.
\end{align*}
Notice that there is a constant $c$ (independent of $k$, $\eta$, $t$, and $\nu$) such that $c < m(t,k,\eta) \leq 1$.
In particular, its presence does not change a norm:
\begin{align}
\norm{m(t,\nabla) \jap{\nabla}^\sigma f}_{L^2} \approx \norm{\jap{\nabla}^\sigma f}_{L^2}. \label{ineq:Mequiv}
\end{align}
The crucial property that $m$ satisfies is:
\begin{align}\label{mxingzhi}
1&\lesssim  \nu^{-1/6}\left(\sqrt{-\frac{\partial_m}{m}(t,k,\eta)}+\nu^{1/2}|k,\eta-kt|\right) \quad \mbox{for } k\neq0,
\end{align}
which implies that
\begin{align}
\norm{f_{\neq}}_{L^2}^2 \lesssim \nu^{-1/3}\left(\norm{ \sqrt{-\frac{\partial_m}{m}} f_{\neq}}_{L^2}^2 + \nu \norm{\nabla_L f_{\neq}}_{L^2}^2\right).
\end{align}

The following  lemma plays a crucial role in our subsequent analysis.
\begin{lemma}\label{keylemma}
	For any $(\rho^{in},\alpha^{in},\omega^{in},\theta^{in}) \in H^{s}(\mathbb{T}\times \mathbb{R})$ with	$s\geq 0$. Assume that  $\gamma>1$, $0<\nu < 1$, and $0<M\leq  \nu^{-1}$. Then there exists a positive constant $C$ independent of $\gamma, \nu, M$ such that
		\begin{align}\label{han307}
&\frac{1}{M}\norm{(p^{-\frac14}{\widehat{\Phi}})(t)}_{H^s}+\norm{(p^{-\frac34}{\widehat{A}})(t)}_{H^s}\nn\\
 &\quad\le C\exp(CM(M+1)\left(\frac{1}{ M}\norm{{\widehat{\Phi}}^{in}}_{H^{s}}+\norm{{\widehat{A}}^{in}}_{H^{s}}
+\norm{{\widehat{\Phi}}^{in}+\widehat{\Omega}^{in}}_{H^{s}}\right).
		\end{align}	
\end{lemma}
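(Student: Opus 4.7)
The plan is to work mode-by-mode in Fourier space for $k\neq 0$ (the $k=0$ component vanishes identically by the preliminary analysis of \eqref{ping1234}) and construct a weighted energy amenable to Gr\"onwall. I would introduce the rescaled unknowns
\begin{equation*}
\widetilde\Phi := \frac{\widehat\Phi}{Mp^{1/4}},\qquad \widetilde A := \frac{\widehat A}{p^{3/4}},
\end{equation*}
so that the target inequality \eqref{han307} reduces to a uniform-in-$t$ $L^2$ bound (weighted by $\langle k,\eta\rangle^{2s}$) for $(\widetilde\Phi,\widetilde A)$. A direct computation recasts \eqref{han3} as
\begin{align*}
\partial_t\widetilde\Phi &= -\tfrac{1}{4}\tfrac{\partial_t p}{p}\widetilde\Phi-\tfrac{p^{1/2}}{M}\widetilde A,\\
\partial_t\widetilde A &= \Big(\tfrac{1}{4}\tfrac{\partial_t p}{p}-\nu p\Big)\widetilde A+\Big(\tfrac{p^{1/2}}{M}+\tfrac{2k^2 M}{p^{3/2}}\Big)\widetilde\Phi-\tfrac{2k^2}{p^{7/4}}(\widehat\Phi^{in}+\widehat\Omega^{in}).
\end{align*}

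The opposite signs of the principal antisymmetric coupling $\pm p^{1/2}/M$ force an exact cancellation in $\partial_t(|\widetilde\Phi|^2+|\widetilde A|^2)$, leaving the identity
\begin{equation*}
\partial_t(|\widetilde\Phi|^2+|\widetilde A|^2)+2\nu p|\widetilde A|^2=\tfrac{1}{2}\tfrac{\partial_t p}{p}(|\widetilde A|^2-|\widetilde\Phi|^2)+\tfrac{4k^2 M}{p^{3/2}}\Re\overline{\widetilde\Phi}\widetilde A-\tfrac{4k^2}{p^{7/4}}\Re\overline{\widetilde A}(\widehat\Phi^{in}+\widehat\Omega^{in}).
\end{equation*}
The lower-order cross term $(4k^2M/p^{3/2})\Re\overline{\widetilde\Phi}\widetilde A$ is benign, since $\int_0^\infty k^2/p^{3/2}\,dt \lesssim 1/|k|$; it contributes $O(M)$ to the eventual Gr\"onwall exponent.

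The main obstacle is the residual $\tfrac{1}{2}(\partial_t p/p)(|\widetilde A|^2-|\widetilde\Phi|^2)$, whose coefficient $|\partial_t p|/p$ is not absolutely integrable in $t$ at fixed $(k,\eta)$ (its antiderivative grows like $\log p(t)$). To neutralize it I would introduce the modified energy
\begin{equation*}
E:=|\widetilde\Phi|^2+|\widetilde A|^2+c(t,k,\eta)\,\Re\overline{\widetilde\Phi}\widetilde A,\qquad c:=\frac{M\partial_t p}{2p^{3/2}}.
\end{equation*}
The key point is that in the computation of $\partial_t(\Re\overline{\widetilde\Phi}\widetilde A)$ the principal coupling produces the contribution $(p^{1/2}/M)(|\widetilde\Phi|^2-|\widetilde A|^2)$, and multiplication by the chosen $c$ yields exactly $(\partial_t p/(2p))(|\widetilde\Phi|^2-|\widetilde A|^2)$, which cancels the obstruction. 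The new terms introduced by the correction -- namely $(\partial_t c)\Re\overline{\widetilde\Phi}\widetilde A$, $c\cdot(2k^2M/p^{3/2})|\widetilde\Phi|^2$, and (after a Cauchy--Schwarz splitting against the dissipation) the contribution of $-c\nu p\,\Re\overline{\widetilde\Phi}\widetilde A$ -- all carry coefficients with bounded time integrals. Using $\partial_t^2 p=2k^2$ and the explicit estimates
\begin{equation*}
\int_0^\infty\tfrac{k^2}{p^{3/2}}\,dt\lesssim\tfrac{1}{|k|},\quad \int_0^\infty\tfrac{(\partial_t p)^2}{p^{5/2}}\,dt\lesssim\tfrac{1}{|k|},\quad \int_0^\infty\tfrac{k^2|\partial_t p|}{p^{3}}\,dt\lesssim\tfrac{1}{k^2},\quad \int_0^\infty\tfrac{\nu(\partial_t p)^2}{p^{2}}\,dt\lesssim\nu,
\end{equation*}
multiplied by the $M$- and $M^2$-factors that arise from $c$ and $\partial_t c$, the total contribution to the Gr\"onwall exponent is $O(M(M+1))$.

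Since $|c|\lesssim M/|k|$, the modified energy $E$ is equivalent to $|\widetilde\Phi|^2+|\widetilde A|^2$ up to a polynomial-in-$M$ constant (absorbed into the final exponential; a routine refinement of $c$ handles the finitely many small-$|k|$ modes where $|c|$ may exceed~$1$). Gr\"onwall therefore yields the mode-by-mode bound
\begin{equation*}
(|\widetilde\Phi|^2+|\widetilde A|^2)(t)\lesssim \exp\bigl(CM(M+1)\bigr)\bigl[(|\widetilde\Phi|^2+|\widetilde A|^2)(0)+|\widehat\Phi^{in}+\widehat\Omega^{in}|^2\bigr],
\end{equation*}
and multiplication by $\langle k,\eta\rangle^{2s}$ followed by integration in $(k,\eta)$ produces \eqref{han307}. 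The hardest step is clearly the construction of the corrector $c$: the specific weights $p^{-1/4}/M$ and $p^{-3/4}$ are chosen precisely so that the principal coupling cancels automatically, the non-Hamiltonian coupling has a time-integrable prefactor, and the $(\partial_t p/p)$ obstruction can be killed by a single cross-term whose own derivative generates only time-integrable residuals.
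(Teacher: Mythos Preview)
Your approach is essentially the paper's: the same weighted unknowns $p^{-1/4}\widehat\Phi/M$ and $p^{-3/4}\widehat A$, the same cross-correction $\propto (M\partial_t p/p^{3/2})\,\Re\overline{\widetilde\Phi}\widetilde A$ to kill the non-integrable $\tfrac{\partial_t p}{p}(|\widetilde A|^2-|\widetilde\Phi|^2)$, and the same Gr\"onwall closure via $\int_0^\infty k^2/p\,dt\le\pi$. You are also right that the ghost multiplier $m$ and the second cross-term $-\tfrac{M\nu^{1/3}}{4}p^{-1/2}\Re(\bar Z_1 Z_2)$, which the paper carries along, are not needed for \eqref{han307} itself; they are there only to manufacture the $e^{-c\nu^{1/3}t}$ decay used later in Section~2.3.

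There is, however, a genuine gap in your coercivity step. You write that $|c|\lesssim M/|k|$ and that the resulting equivalence constant can be ``absorbed into the final exponential,'' with a ``routine refinement of $c$'' for small $|k|$. This does not work: whenever $|c|>2$ (which happens for every mode $|k|\lesssim M$, and $M$ may be as large as $\nu^{-1}$) the quadratic form $|\widetilde\Phi|^2+|\widetilde A|^2+c\,\Re\overline{\widetilde\Phi}\widetilde A$ is \emph{indefinite}, so no multiplicative constant restores positivity, and any smooth truncation of $c$ would spoil the exact cancellation you need. The paper's fix is to add the quadratic term $\tfrac12 M^2\tfrac{(\partial_t p)^2}{p^3}|Z_1|^2$ (essentially $c^2|\widetilde\Phi|^2$) to the energy; this makes the functional uniformly equivalent to $(1+M^2(\partial_t p)^2/p^3)|Z_1|^2+|Z_2|^2$ for all $M$, and its time derivative produces only further residuals of the types you already listed (coefficients $k^2\partial_t p/p^3$, $(\partial_t p)^3/p^4$, $(\partial_t p)^2/p^{5/2}$), all with bounded time integrals. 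With that single addition your argument closes.
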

	\begin{proof}
For any $s\ge0,$ we define two weighted functions involved in $\widehat{\Phi}$ and $\widehat{A}$ as
\begin{align}\label{han4}
Z_1(t)\stackrel{\mathrm{def}}{=}&\frac{1}{M}\jap{k,\eta}^s(m^{-1} p^{-\frac14}\widehat{\Phi})(t),
\end{align}
\begin{align}\label{han4+1}
 Z_2(t)\stackrel{\mathrm{def}}{=}&\jap{k,\eta}^s(m^{-1} p^{-\frac34}\widehat{A})(t).
\end{align}
To begin with, from equations in \eqref{han3} and definitions of $ Z_1$  and $ Z_2$, a simple computations gives
		\begin{eqnarray}\label{han8}
\left\{\begin{aligned}
&{\partial_t} Z_1=-\frac{{\partial_t} m}{m}Z_1-\frac14\frac{{\partial_t} p}{p}Z_1-\frac{1}{M}p^{\frac12}Z_2,\\
&{\partial_t} Z_2=-\left(\frac{{\partial_t} m}{m}+\nu p\right)Z_2+\frac14\frac{{\partial_t} p}{p}Z_2\\
&\qquad+\left(\frac{1}{M}p^{\frac12}+2M\frac{k^2}{p^{\frac32}}\right)Z_1
-\jap{k,\eta}^s\frac{2m^{-1}k^2}{p^{\frac74}}(\widehat\Phi^{in}+\widehat\Omega^{in}).
\end{aligned}\right.
\end{eqnarray}
Now, we get by multiplying  the first equation by $\bar{Z}_1$ and the second equation by $\bar{Z}_2$ in \eqref{han8} respectively, that
		\begin{equation}\label{han9}
			\frac12{\frac{{d}}{{d} t}} |Z_1|^2=-\frac{{\partial_t} m}{m}|Z_1|^2-\frac14\frac{{\partial_t} p}{p}|Z_1|^2-\frac1M p^{\frac12}\mathrm{Re}(\bar{Z}_1Z_2),
		\end{equation}
	\begin{align}\label{han10}
			\frac12 {\frac{{d}}{{d} t}} |Z_2|^2=&-\left(\frac{{\partial_t} m}{m}+\nu p\right)|Z_2|^2+\frac14 \frac{{\partial_t} p}{p}|Z_2|^2+\frac1M p^{\frac12}\mathrm{Re}(Z_1\bar{Z}_2)\nn\\
&\quad
+{2M\frac{k^2}{p^{\frac32}}\mathrm{Re}(Z_1\bar{Z}_2)}
-\jap{k,\eta}^s\frac{2m^{-1}k^2}{p^{\frac74}}\mathrm{Re}((\widehat\Phi^{in}+\widehat\Omega^{in})\bar{Z}_2).
		\end{align}
			From $p(t,k,\eta)=k^2+(\eta-kt)^2>0$, one has for any
 $t>\eta/k$ there holds ${{\partial_t} p}/{p}>0$, the third term on the right-hand side of the first equation in \eqref{han8} acts as a damping term for $Z_1$. Instead, ${{\partial_t} p}/{p}<0$ for $t<\eta/k$, hence it induces a growth on $Z_1$.
However, the situation is opposite for the second equation involved in $Z_2$. That is to say,
for $t>\eta/k$,  the term $({{\partial_t} p}/{p})Z_2 $ induces a growth, for $t<\eta/k$, the term $({{\partial_t} p}/{p})Z_2 $ acts as  a damping term. Thus, there is a competition between $Z_1$ and $Z_2$.
To balance this relation, we have to consider the time derivative of the mixed terms involved in $Z_1$, $Z_2$:
\begin{align}\label{}
{\frac{{d}}{{d} t}}\left(\frac{{\partial_t} p}{p^{\frac32}} Z_1\right)
=&-\frac{{\partial_t} m}{m}\frac{{\partial_t} p}{p^{\frac32}}Z_1+\left(\frac{2k^2}{p^{\frac32}}-\frac74\frac{({\partial_t} p)^2}{p^{\frac52}}\right)Z_1-\frac{1}{M}\frac{{\partial_t} p}{p}Z_2
	\end{align}
from which and the second equation in \eqref{han8}, we can further get
\begin{align}\label{han11}
\frac{M}{4}{\frac{{d}}{{d} t}} \left(\frac{{\partial_t} p}{p^{\frac32}}\mathrm{Re}(\bar{Z}_1Z_2)\right)
=&-\frac14 \frac{{\partial_t} p}{p}(|Z_2|^2-|Z_1|^2)+{\frac{M}{4}\left(\frac{2k^2}{p^\frac32}-\frac32 \frac{({\partial_t} p)^2}{p^\frac52}\right)\mathrm{Re}(\bar{Z}_1Z_2)}\nn\\
&\quad-{\frac{M}{2}\frac{{\partial_t} m}{m}\frac{{\partial_t} p}{p^{3/2}}\mathrm{Re}(\bar{Z}_1Z_2)}{-\nu\frac{M}{4}\frac{{\partial_t} p}{p^{\frac12}}\mathrm{Re}(\bar{Z}_1Z_2)}+{M^2\frac{k^2{\partial_t} p}{2p^3}|Z_1|^2}\nn\\
&\quad
-\jap{k,\eta}^sm^{-1}\frac{Mk^2\partial_tp}{2p^{\frac{13}{4}}}\mathrm{Re}\left((\widehat\Phi^{in}+\widehat\Omega^{in})\bar{Z}_2\right).
		\end{align}
It's obvious that the first term on the right hand side of \eqref{han11} could cancel two bad  terms $-\frac14\frac{{\partial_t} p}{p}|Z_1|^2$ appeared in \eqref{han9} and  $\frac14\frac{{\partial_t} p}{p}|Z_1|^2$ appeared in \eqref{han10}.

Due to lack of a diffusive term in the equation of $\widehat{\Phi}$, we have to exploit the special structural characteristics (wave structure) of  \eqref{han8} to find  hidden dissipation for $Z_1.$
So, we also need to consider the time derivative of the mixed terms involved in $Z_1$, $Z_2$ with different weight as
\begin{align}\label{}
{\frac{{d}}{{d} t}}\left(p^{-\frac12} Z_1\right)
=&-\frac{{\partial_t} m}{m}p^{-\frac12}Z_1
-\frac{3}{4}\frac{\partial_tp}{p^{\frac32}}Z_1-\frac{1}{M}Z_2
	\end{align}
which combines with the second equation in \eqref{han8} give rise to
\begin{align}\label{han12-1}
- {\frac{{d}}{{d} t}} \left(p^{-\frac12}\mathrm{Re}(\bar{Z}_1Z_2)\right)
=&-\frac{1}{M}\left(1+2M^2\frac{k^2}{p^2}\right)|Z_1|^2
+{\frac{1}{2} \frac{{\partial_t} p}{p^{\frac32}}\mathrm{Re}(\bar{Z}_1Z_2)}\nn\\
			&+{2\frac{{\partial_t} m}{mp^{\frac12}}\mathrm{Re}(\bar{Z}_1Z_2)}+{\frac{1}{M}|Z_2|^2}+{ \nu p^\frac12\mathrm{Re}(\bar{Z}_1Z_2)}\nn\\
&+\jap{k,\eta}^s\frac{2m^{-1}k^2}{p^{\frac94}}\mathrm{Re}((\widehat\Phi^{in}+\widehat\Omega^{in})\bar{Z}_1).
		\end{align}
Finally, in order  to define a coercive energy functional, we need to consider the time derivative of the  term ${{p^{-\frac32}{\partial_t} p}}Z_1$ or ${{p^{-\frac32}{\partial_t} p}}Z_2$. Here, we choose the former.
\begin{equation}\label{han12}
\begin{split}
\frac{M^2}{2}{\frac{{d}}{{d} t}} \left|\frac{{\partial_t} p}{p^{\frac32}}Z_1\right|^2=&{M^2\left(\frac{2k^2{\partial_t} p}{p^3}-\frac74\frac{({\partial_t} p)^3}{p^4}\right)|Z_1|^2}\nn\\
&{-M^2\frac{{\partial_t} m}{m}\frac{({\partial_t} p)^2}{p^{3}}|Z_1|^2}
				{-M \frac{({\partial_t} p)^2}{p^{\frac52}}\mathrm{Re}(\bar{Z}_1Z_2)}.
			\end{split}
		\end{equation}
Now, we define the following energy functional
	\begin{align}\label{han5}
			{\mathcal{E}(t)}= &\frac12\left(1+M^2\frac{({\partial_t} p)^2}{p^3}\right)|Z_1|^2(t)+\frac12|Z_2|^2(t)\nn\\%
		&+\left(\frac{M}{4} \frac{{\partial_t} p}{p^{\frac32}}\mathrm{Re}(\bar{Z}_1Z_2)\right)(t)-\left(\frac{M\nu^{\frac13}}{4} p^{-\frac12}\mathrm{Re}(\bar{Z}_1Z_2)\right)(t).
	\end{align}
Multiplying by $\frac{M\nu^{\frac13}}{4}$ on both hand side of \eqref{han12-1} then summing up \eqref{han9}, \eqref{han10}, \eqref{han11}, and \eqref{han12}
gives
\begin{align}\label{han13}
			&{\frac{{d}}{{d} t}} {\mathcal{E}(t)}+\left(\frac{{\partial_t} m}{m}+\frac{\nu^{\frac13}}{4}\big(1+2M^2\frac{k^2}{p^2}\big)+M^2\frac{{\partial_t} m}{m}\frac{({\partial_t} p)^2}{p^3}\right)|Z_1|^2+\left(\frac{{\partial_t} m}{m}+\nu p\right)|Z_2|^2
\nn\\
&\quad=\frac{\nu^{\frac13}}{4}|Z_2|^2+\frac{M\nu^{\frac43}}{4}  p^{\frac12}\mathrm{Re}(\bar{Z}_1Z_2)-\frac{\nu M}{4}\frac{{\partial_t} p}{p^{\frac12}}\mathrm{Re}(\bar{Z}_1Z_2)+M^2\left(\frac{5k^2{\partial_t} p}{2p^3}-\frac{7}{4}\frac{({\partial_t} p)^3}{p^4}\right)|Z_1|^2\nn\\
&\qquad+ M\left(\frac{\nu^{\frac13}}{8}\frac{{\partial_t} p}{p^\frac32}+\frac52\frac{k^2}{p^{\frac32}}-\frac{11}{8}\frac{({\partial_t} p)^2}{p^{\frac52}}
			+\frac{\nu^{\frac13}}{2} \frac{{\partial_t} m}{mp^{\frac12}}\right)\mathrm{Re}(\bar{Z}_1Z_2)\nn\\
&\qquad
+\jap{k,\eta}^s\frac{M\nu^{\frac13}m^{-1}k^2}{2p^{\frac94}}\mathrm{Re}((\widehat\Phi^{in}+\widehat\Omega^{in})\bar{Z}_1)
-\jap{k,\eta}^s\frac{Mm^{-1}k^2\partial_tp}{2p^{\frac{13}{4}}}\mathrm{Re}((\widehat\Phi^{in}+\widehat\Omega^{in})\bar{Z}_2)\nn\\
&\quad\stackrel{\mathrm{def}}{=}\mathcal{D}_1+\mathcal{D}_2+\mathcal{D}_3+\mathcal{D}_4+\mathcal{D}_5+\mathcal{D}_6+\mathcal{D}_7.
		\end{align}

Now we go to bound the right hand side of \eqref{han13}.
First, from \eqref{mxingzhi}, there holds
\begin{align}\label{ahan13}
\frac{{\partial_t} m}{m}+\nu p\geq \nu^{\frac13}.
\end{align}
Hence, the first  term $\mathcal{D}_1 $ can be absorbed  directly by the left. We next consider $\mathcal{D}_2$.
With the aid of the Cauchy-Schwarz inequality, one has
\begin{align}\label{ahan131}
 |\mathcal{D}_2|
 \leq&  \frac{M\nu^{\frac13}}{8}(\nu|Z_1|^2+\nu p|Z_2|^2)\nn\\
 \leq&  \frac{M\nu}{8}(\nu^{\frac13}|Z_1|^2)+\frac{M\nu^{\frac13}}{8}(\nu p|Z_2|^2).
\end{align}
As a result, in order to absorb  $\mathcal{D}_2$ by the left, we need the assumption $M\nu\le1$.
	Since $|{\partial_t} p|\leq 2|k|p^\frac12$, we can bound the terms $ \mathcal{D}_3$  as follows
\begin{align}\label{han19}
|\mathcal{D}_3|
&\leq\frac{\nu }{4}(\frac{2M|k|p^\frac12}{p^{\frac12}}\mathrm{Re}(\bar{Z}_1Z_2))\nn\\
&\leq\frac{\nu }{4}\left(4\frac{ M^2k^2}{p}|Z_1|^2+\frac14(p|Z_2|^2)\right)\nn\\
&\leq \frac{\nu M^2k^2}{p}|Z_1|^2+\frac{1}{16}\nu p|Z_2|^2.
		\end{align}
In the same manner,  from $|{\partial_t} p|\leq 2|k|p^\frac12$ and the fact that ${|k|}{p^{-\frac32}}\le1$, we have
\begin{align}\label{ahan132}
|\mathcal{D}_4|&\leq {19}M^2\frac{|k|^3}{p^\frac52}|Z_1|^2\leq CM^2\frac{k^2}{p}|Z_1|^2.
\end{align}
In the following, we bound the terms in $\mathcal{D}_5$.
Thanks to $|{\partial_t} p|\leq 2|k|p^\frac12$ again, we have
\begin{align}\label{ahan133}
|\mathcal{D}_5|
%&\leq M\left(\frac{\nu^{\frac13}}{8}\frac{{\partial_t} p}{p^\frac32}+\frac52\frac{k^2}{p^{\frac32}}-\frac{11}{8}\frac{({\partial_t} p)^2}{p^{\frac52}}
			%+\frac{\nu^{\frac13}}{2} \frac{{\partial_t} m}{m}p^{-\frac12}\right)\mathrm{Re}(\bar{Z}_1Z_2)\nn\\
%&\leq M \left(\frac{\nu^{\frac13}}{8}\frac{2|k|p^\frac12}{p^\frac32}+\frac52\frac{k^2}{p^{\frac32}}-\frac{11}{8}\frac{(2|k|p^\frac12)^2}{p^{\frac52}}
	%		+\frac{\nu^{\frac13}}{2} \frac{{\partial_t} m}{mp^{\frac12}}\right)\mathrm{Re}(\bar{Z}_1Z_2)\nn\\
\leq& \frac{M\nu^{\frac13}}{4}\frac{|k|}{p^2}\mathrm{Re}(\bar{Z}_1Z_2)+\frac{M}{4}\frac{|k|^2}{p^{\frac32}}\mathrm{Re}(\bar{Z}_1Z_2)
+\frac{M\nu^{\frac13}}{2} \frac{{\partial_t} m}{mp^{\frac12}}\mathrm{Re}(\bar{Z}_1Z_2)\nn\\
\stackrel{\mathrm{def}}{=}&\mathcal{D}_{5,1}+\mathcal{D}_{5,2}+\mathcal{D}_{5,3}.
\end{align}
Due to $\nu\le1$ and $p^{-{\frac32}}\le1$, we can bound $\mathcal{D}_{5,1}$ as
\begin{align}\label{ahan134}
			 |\mathcal{D}_{5,1}|&\leq   CM\frac{k^2}{p}(|Z_1|^2+|Z_2|^2).
		\end{align}
The term  $\mathcal{D}_{5,2}$ can be  controlled similarly if noticing the fact that $|k|p^{-1}\le1$.

For the last term $\mathcal{D}_{5,3}$, we can use $M\nu^{\frac13}\le1$ and $p^{-{\frac12}}\le1$ to get
\begin{align}\label{ahan135}
			 |\mathcal{D}_{5,3}|&\leq   C \frac{{\partial_t} m}{m}(|Z_1|^2+|Z_2|^2).
		\end{align}
Substituting the above estimates involved in $\mathcal{D}_{5,1}, \mathcal{D}_{5,2}, \mathcal{D}_{5,3}$ into \eqref{ahan133}, we can get
\begin{align}\label{ahan136}
			 |\mathcal{D}_{5}|&\leq   CM\frac{k^2}{p}(|Z_1|^2+|Z_2|^2)+ C \frac{{\partial_t} m}{m}(|Z_1|^2+|Z_2|^2).
		\end{align}
From
$\nu\le1$,  $p^{-{1}}\le1$, $|{\partial_t} p|\leq 2|k|p^\frac12$ and
 the multiplier  $m^{-1}$ is a  bound Fourier multiplier, we can get
\begin{align*}%\label{ahan138}
\frac{Mm^{-1}k^2\partial_tp}{2p^{\frac{13}{4}}}+\frac{M\nu^{\frac13}m^{-1}k^2}{2p^{\frac94}}\le CM\frac{|k|^2}{p},
\end{align*}
from which and the Young inequality give rise to
\begin{align}\label{ahan137}
 |\mathcal{D}_6|+|\mathcal{D}_7|
   &\leq CM\frac{k^2}{p}\left(\jap{k,\eta}^{2s } |\widehat\Phi^{in}+\widehat\Omega^{in}|^2 +(|Z_1|^2+|Z_2|^2)\right).
		\end{align}
Noticing the fact that
$$M^2\frac{{\partial_t} m}{m}\frac{({\partial_t} p)^2}{p^3}>0,$$ and then
inserting \eqref{ahan131}, \eqref{han19}, \eqref{ahan132}, \eqref{ahan133},
\eqref{ahan136}, \eqref{ahan137} into \eqref{han13}, we can get
\begin{align}\label{han22}
			&{\frac{{d}}{{d} t}} {\mathcal{E}(t)}+ \frac{\nu^\frac13}{16}\left((1+4M^2\frac{k^2}{p^2})|Z_1|^2+|Z_2|^2\right)\nn\\
&\quad\le CM\frac{k^2}{p}\jap{k,\eta}^{2s } |\widehat\Phi^{in}+\widehat\Omega^{in}|^2+C\left(M(M+1)\frac{k^2}{p}+\frac{{\partial_t} m}{m}\right){\mathcal{E}(t)}.
		\end{align}
As
\begin{align*}%\label{han23}
			 4M^2 \frac{k^2}{p^2}\ge M^2\frac{({\partial_t} p)^2}{p^3},
		\end{align*}
we can further get
\begin{equation}\label{han24}
			{\frac{{d}}{{d} t}} {\mathcal{E}(t)} +\frac{\nu^{\frac13}}{16}{\mathcal{E}(t)}\leq CM\frac{k^2}{p}\jap{k,\eta}^{2s } |\widehat\Phi^{in}+\widehat\Omega^{in}|^2+ C\left(M(M+1)\frac{k^2}{p}+2\frac{{\partial_t} m}{m}\right){\mathcal{E}(t)}.
		\end{equation}
 It's easy to check that there holds
		\begin{align*}%\label{han20}
			\int_0^t \frac{k^2}{p(\tau)} \,d\tau=\int_0^t\frac{d\tau}{(\frac{\eta}{k}-\tau)^2+1}=\ \left(\arctan(\frac{\eta}{k}-t)-\arctan(\frac{\eta}{k})\right).
		\end{align*}
As a result, applying  Gronwall's inequality to \eqref{han24} we have
		\begin{equation}\label{han25}
			{\mathcal{E}(t)}\leq C\left(\mathcal{E}(0)+\jap{k,\eta}^{2s } |\widehat\Phi^{in}+\widehat\Omega^{in}|^2\right)\exp(CM(M+1)  .
		\end{equation}
From $|{\partial_t} p|<p$, it's not hard to  check that	
	\begin{align*}%\label{han6}
		{\mathcal{E}(t)}\approx \ \frac{1}{4}\left((1+M^2\frac{({\partial_t} p)^2}{p^3})|Z_1|^2+|Z_2|^2\right)(t)
	\end{align*}
which combines the fact that
	$m$ is a bounded Fourier multiplier and the definitions of $Z_1, Z_2$, we  can further get
	\begin{align}\label{dengjia}
		\sum_{k}\int {\mathcal{E}(t)}d\eta\approx\frac{1}{M^2}\norm{p^{-\frac14}\widehat{\Phi}(t)}_{H^s}^2+\norm{p^{-\frac34}\widehat{A}(t)}_{H^s}^2
	\end{align}
which implies that
\begin{align}\label{han7}
			&\frac{1}{M}\norm{(p^{-\frac14}{\widehat{\Phi}})(t)}_{H^s}+\norm{(p^{-\frac34}{\widehat{A}})(t)}_{H^s}\nn\\
&\quad\le C\exp(CM(M+1)\left(\frac{1}{ M}\norm{{\widehat{\Phi}}^{in}}_{H^{s}}+\norm{{\widehat{A}}^{in}}_{H^{s}}
+\norm{{\widehat{\Phi}}^{in}+\widehat{\Omega}^{in}}_{H^{s}}\right).
		\end{align}	
This proves the lemma.
	\end{proof}
\subsection{The proof of  Theorem \ref{dingli} for general $\rho^{in}, \theta^{in},\omega^{in}$}
	Thanks to the previous Lemma, we are now to conclude the proof of Theorem \ref{dingli}.
First, from \eqref{han2} and Lemma \ref{keylemma}, we have	
\begin{align}\label{han325}		
			\norm{\Omega(t)}_{H^s}= &\norm{-\Phi(t)}_{H^s}\nn\\
			=&M\norm{p^{\frac14}(M^{-1}p^{-\frac14}\widehat{\Phi})(t)}_{H^s}\nn\\
			\le  &CM\langle t \rangle^{\frac12}\norm{M^{-1}p^{-\frac14}\widehat{\Phi}(t)}_{H^{s+\frac12}}\nn\\
\le &C{\gamma^{-1} }\exp(CM(M+1)\langle t \rangle^{\frac12}C_{{in},{s+\frac12}}
\end{align}
with
\begin{align*}
C_{{in},{s+\frac12}}\stackrel{\mathrm{def}}{=}\left(\frac{1}{ M}\norm{{\rho}^{in}+{\theta}^{in}}_{H^{s+\frac12}}+\norm{{\alpha}^{in}}_{H^{s+\frac12}}
+\gamma\norm{\omega^{in}}_{H^{s+\frac12}}\right).
		\end{align*}
Recall the definition of $\p[{\mathbf{v}}]$ in \eqref{m7}, there holds
\begin{align*}%\label{han29}			
\norm{\p[{\mathbf{v}}]^x(t)}_{L^2}
=&\norm{\partial_y\Delta^{-1}\omega(t)}_{L^2}\nn\\
=&\norm{(\partial_Y-t\partial_X)(\Delta_L^{-1}\Omega)(t)}_{L^2}\nn\\
			\leq&C\norm{((-\Delta_L)^{-\frac12}\Omega)(t)}_{L^2}.
		\end{align*}
Therefore,  we get from $$p^{\frac12}\jap{kt}\geq C\jap{\eta-kt}\jap{kt}\ge C\jap{\eta}$$ and the estimate \eqref{han325}	 that
		\begin{align}\label{han30}
			\norm{\p[{\mathbf{v}}]^x(t)}_{L^2}\le& C\frac{1}{\langle t\rangle}\norm{{\Omega}(t)}_{H^1}\nn\\
\le&C\langle t \rangle^{-\frac12}{\gamma^{-1} }\exp(CM(M+1)C_{{in},{\frac32}}.
		\end{align}
In the same manner, we can deal with the second component of $\p[{\mathbf{v}}]^y$
\begin{align}\label{han31}
			\norma{\p[{\mathbf{v}}]^y(t)}{L^2}
=&\norma{{\partial_x} \Delta^{-1}\omega}{L^2} =\norm{\partial_X(\Delta_L^{-1}\Omega)(t)}_{L^2}\nn\\
			= & \norm{\frac{k}{p}\Omega(t)}_{L^2}
\le C {{\langle t\rangle}^{-2}}\norm{{\Omega}(t)}_{H^1}\nn\\
\le&C\langle t \rangle^{-\frac32}{\gamma^{-1} }\exp(CM(M+1)C_{{in},{\frac32}}.
		\end{align}

Finally, we estimate the compressible part of the velocity. On the one hand, from the Helmholtz decomposition and the  change of coordinates, we get
\begin{align}\label{han32}
			&\norma{\q[{\mathbf{v}}](t)}{L^2}+\frac{1}{{M}}\norma{\rho(t)+{\theta}(t)}{L^2}\nn\\
&\quad=\norm{(-\Delta)^{-\frac12}\alpha(t)}_{L^2}+\frac{1}{M}\norm{\rho(t)+{\theta}(t)}_{L^2}\nn\\
			&\quad=\norm{(-\Delta_L)^{-\frac12}A(t)}_{L^2}+\frac{1}{M}\norm{R(t)+\Theta(t)}_{L^2}\nn\\
&\quad=\norm{(-\Delta_L)^{-\frac12}A(t)}_{L^2}+\frac{\gamma}{M}\norm{\Phi(t)}_{L^2}.
		\end{align}
As a result, we can further deduce from
 \eqref{han32}, Lemma \ref{keylemma} and the fact that $p\leq\jap{t}^2\jap{k,\eta}^2$ that
		\begin{align}\label{han28}
&\norma{\q[{\mathbf{v}}](t)}{L^2}+\frac{1}{{M}}\norma{\rho(t)+{\theta}(t)}{L^2}
\nn\\
&\quad=\norm{p^{\frac14}(p^{-\frac34}\widehat{A})(t)}_{L^2}
+\frac{\gamma}{M}\norm{p^{\frac14}(p^{-\frac14}\widehat{\Phi})(t)}_{L^2}\nn\\
&\quad\le C \langle t \rangle^{\frac12} \left(\norm{(p^{-\frac34}\widehat{A})(t)}_{H^1}+\frac{\gamma}{M}\norm{(p^{-\frac14}\widehat{\Phi})(t)}_{H^1}\right)\nn\\
			&\quad\le C\langle t \rangle^{\frac12}{(1+\gamma)}{\gamma^{-1} }\exp(CM(M+1)C_{{in},{1}}.
		\end{align}		
On the other hand, by \eqref{m10}, there holds
\begin{align*}%\label{}	
 &({\partial_t}+y{\partial_x}) ((\gamma-1)\rho-{\theta})=0
		\end{align*}
which implies that
\begin{align*}
(\gamma-1)\rho-{\theta}=(\gamma-1)\rho^{in}-{\theta}^{in}.
		\end{align*}
Moreover, we have
\begin{align*}
\left\|\frac{(\gamma-1)\rho(t)-{\theta}(t)}{{M}}\right\|_{{L^2}}^2
=\left\|\frac{(\gamma-1)\rho^{in}-{\theta}^{in}}{{M}}\right\|_{{L^2}}^2.
\end{align*}

A simple computation  gives
\begin{eqnarray*}
\left\{\begin{aligned}
&\frac{\gamma}{{M}}\rho={\frac{(\gamma-1)\rho-{\theta}}{{M}}+\frac{\rho+{\theta}}{{M}}},\\
&\frac{\gamma}{{M}}{\theta}=-\frac{(\gamma-1)\rho-{\theta}}{{M}}+{(\gamma-1)}\frac{\rho+{\theta}}{{M}}.
\end{aligned}\right.
\end{eqnarray*}
Hence
\begin{align*}
\frac{\gamma}{{M}}\norma{\rho(t)}{L^2}
=& \left(\left\|\frac{(\gamma-1)\rho-{\theta}}{{M}}\right\|_{L^2}+\left\|\frac{\rho+{\theta}}{{M}}\right\|_{L^2}\right)\nonumber\\
\le& C\langle t \rangle^{\frac12}\Bigg\{\left\|\frac{(\gamma-1)\rho^{in}-{\theta}^{in}}{{M}}\right\|_{{L^2}}
+  {(1+\gamma)}{\gamma^{-1} }\exp(CM(M+1)C_{{in},{1}}\Bigg\},
\end{align*}
and
\begin{align*}
\frac{\gamma}{{M}}\norma{{\theta}(t)}{L^2}
=& \left\|\frac{(\gamma-1)\rho-{\theta}}{{M}}\right\|_{L^2}+(\gamma-1)\left\|\frac{\rho+{\theta}}{{M}}\right\|_{L^2}\nonumber\\
\le& {C(\gamma-1)}\left\|\frac{(\gamma-1)\rho^{in}-{\theta}^{in}}{{M}}\right\|_{{L^2}}
\nonumber\\
&
+  C\langle t \rangle^{\frac12}{(\gamma-{\gamma^{-1} })}\exp(CM(M+1)C_{{in},{1}}.
\end{align*}
This proves the first case
for general $\rho^{in}, \theta^{in},\omega^{in}$.

\subsection{The proof of  Theorem \ref{dingli} with special $\rho^{in}, \theta^{in},\omega^{in}$ satisfying \eqref{xianzhiguanxi}}
If $$
\rho^{in}+\gamma\omega^{in}+\theta^{in}=0,$$
from
${\partial_t}( R +\gamma \Omega+{\Theta})=0
$
we can infer that
\begin{align*}%\label{aming1}
& \Omega=-\frac{R+\Theta}{\gamma}=-\Phi.
\end{align*}

Thus, we get a closed system only involved in $\widehat{\Phi}$ and $\widehat{A}$
\begin{eqnarray}\label{aming2}
\left\{\begin{aligned}
&{\partial_t} {\widehat{\Phi}}=-{\widehat{A}},\\
&{\partial_t} {\widehat{A}}=-\nu p {\widehat{A}}+\frac{{\partial_t} p}{ p}{\widehat{A}}+(\frac{ p}{{M}^2}+\frac{2k^2}{ p}){\widehat{\Phi}}.
\end{aligned}\right.
\end{eqnarray}
For the above system \eqref{aming2}, we can make a similar argument as the proof of Lemma \ref{keylemma} to get
another version of \eqref{han24} which don't involve in $\widehat\Phi^{in},\widehat\Omega^{in}$ that
\begin{equation}\label{aming3}
			{\frac{{d}}{{d} t}} {\mathcal{E}(t)} +\frac{\nu^{\frac13}}{16}{\mathcal{E}(t)}\leq C\left(M(M+1)\frac{k^2}{p}+\frac{{\partial_t} m}{m}\right){\mathcal{E}(t)},
		\end{equation}

Consequently, applying  Gronwall's inequality to \eqref{aming3} we have
		\begin{align*}%\label{aming4}
			{\mathcal{E}(t)}\leq C\exp(CM(M+1) )e^{-\frac{ \nu^\frac13 }{16}t} \mathcal{E}(0)
		\end{align*}
which combines with the equivalent  relation \eqref{dengjia} give
	\begin{align}\label{aming5}
			&\frac{1}{M}\norm{(p^{-\frac14}{\widehat{\Phi}})(t)}_{H^s}+\norm{(p^{-\frac34}{\widehat{A}})(t)}_{H^s}\nn\\
		&\quad\le C\exp(CM(M+1) ) e^{-\frac{1}{32}\nu^\frac13 t}\left(\frac{1}{\gamma M}\norm{{\Phi}^{in}}_{H^{s}}+\norm{{\alpha}^{in}}_{H^{s}}\right).
		\end{align}		
With \eqref{aming5} in hand, we can follow the same argument as the derivation of \eqref{han30}, \eqref{han31}, \eqref{han28} to get
\begin{align*}%\label{aming6}
			\norm{\p[{\mathbf{v}}]^x(t)}_{L^2}\le &C\exp(CM(M+1) )\langle t \rangle^{-\frac12}e^{-\frac{1}{16}\nu^{\frac13}t}\left(\frac{1}{\gamma M}\norm{{\rho}^{in}+{\theta}^{in}}_{H^{\frac32}}+\norm{{\alpha}^{in}}_{H^{\frac32}}\right),
		\end{align*}
\begin{align*}%\label{aming7}
			\norma{\p[{\mathbf{v}}]^y(t)}{L^2}
\le &C\exp(CM(M+1) ){{\langle t\rangle}^{-\frac32}}e^{-\frac{1}{16}\nu^{\frac13}t}\left(\frac{1}{\gamma M}\norm{{\rho}^{in}+{\theta}^{in}}_{H^{\frac32}}+\norm{{\alpha}^{in}}_{H^{\frac32}}\right),
		\end{align*}
and
\begin{align*}%\label{aming8}
&\norma{\q[{\mathbf{v}}](t)}{L^2}+\frac{1}{{M}}\norma{\rho(t)+{\theta}(t)}{L^2}
\nn\\
&\quad\le C(1+\gamma)\exp(CM(M+1) )\langle t \rangle^{\frac12} e^{-\frac{1}{32}\nu^\frac13 t}\left(\frac{1}{\gamma M}\norm{{\rho}^{in}+{\theta}^{in}}_{H^{1}}+\norm{{\alpha}^{in}}_{H^{1}}\right).
		\end{align*}
The proof of Theorem \ref{dingli} is completed.

 \textbf{Acknowledgement.}
This work is supported by  NSFC under grant number 11601533.

\end{document}